\title[Dissipation for Muskat]{Note on the dissipation for the general Muskat problem}
\author{Susanna V. Haziot}
\address{Department of Mathematics, Brown University, Providence, RI 02912}
\email{susanna\_haziot@brown.edu}
\author{Beno\^it Pausader}
\address{Department of Mathematics, Brown University, Providence, RI 02912}
\email{benoit\_pausader@brown.edu}
\theoremstyle{plain}
\newtheorem{theorem}{Theorem}[section]
\newtheorem{lemma}[theorem]{Lemma}
\theoremstyle{definition}
\newtheorem{remark}[theorem]{Remark}
\def\lb{\llbracket}
\def\rb{\rrbracket}
\numberwithin{equation}{section}
\date{today}
\begin{document}
\begin{abstract}
We consider the dissipation of the Muskat problem and we give an elementary proof of a surprising inequality of Constantin-Cordoba-Gancedo-Strain \cite{CCGS,CCGS2} which holds in greater generality.
\end{abstract}

\keywords{Muskat, Hele-Shaw, Free boundary problems.}

\noindent\thanks{\em{ MSC Classification: 35R35, 35Q35, 35S10, 35S50, 76B03.}}

\maketitle

\section{Introduction}

\subsection{The Muskat problem} The general Muskat problem describes the dynamics of two immiscible fluids  in a porous medium with different densities $\rho^\pm$ and viscosities $\mu^\pm$. Let us denote the interface between the two fluids by $\Sigma$ and assume that it is the graph of a time-dependent function $\eta(x, t)$, i.e. 
\begin{equation*}\label{Sigma}
\Sigma_t=\{(x, \eta(x,t)): x\in \mathbb{R}^d\}.
\end{equation*}
The associated time-dependent fluid domains are then given by
\begin{equation*}\label{Omega+}
\begin{split}
\Omega^+_t&=\{(x, y)\in \mathbb{R}^d\times \mathbb{R}: \eta(x,t)<y<\underline{b}^+(x)\},\\
\Omega^-_t&=\{(x, y)\in \mathbb{R}^d\times \mathbb{R}: \underline{b}^-(x)<y<\eta(x,t)\}
\end{split}
\end{equation*}
where $\underline{b}^\pm$ are the parametrizations of the rigid boundaries
\begin{equation*}\label{Gamma:pm}
\Gamma^\pm =\{(x, \underline{b}^\pm(x)): x\in \mathbb{R}^d\}.
\end{equation*}

The incompressible fluid velocity $u^\pm$ in each region is governed by Darcy's law:
\begin{equation*}\label{Darcy:pm}
\mu^\pm u^\pm+\nabla_{x, y}p^\pm=-(0, \rho^\pm),\quad \hbox{div}_{x, y} u^\pm =0\qquad \text{in}~\Omega^\pm_t.
\end{equation*}

At the interface $\Sigma_t$, the normal velocity is continuous, the jump\footnote{here and in the following, we denote the jump of a quantity at the interface to be
\begin{equation*}
\lb f\rb=f^--f^+.
\end{equation*}}
 of the pressure at the interface is related to the surface tension coefficient $\sigma\ge0$ and the interface moves with the fluid: for $\nu=\frac{1}{\sqrt{1+|\nabla \eta|^2}}(-\nabla \eta, 1)$ the upward pointing unit normal to $\Sigma_t$,
\begin{equation*}\label{u.n:pm}
u^+\cdot \nu=u^-\cdot \nu,\quad\lb p\rb =\sigma \kappa(x)\quad\text{on}~\Sigma_t,\qquad \partial_t\eta=\sqrt{1+|\nabla \eta|^2}u^-\cdot \nu\vert_{\Sigma_t},
\end{equation*}
where the mean curvature is given by
\begin{equation*}
\kappa(x):=-\hbox{div}\left(\frac{\nabla\eta}{\sqrt{1+\vert\nabla\eta\vert^2}}\right).
\end{equation*}

Finally, at the two rigid boundaries, the no-penetration boundary conditions are imposed:
\begin{equation*}\label{bc:b:pm}
u^\pm\cdot \nu^\pm=0\quad \text{on}~\Gamma^\pm,
\end{equation*}
where $\nu^\pm=\pm\frac{1}{\sqrt{1+|\nabla \underline b^\pm|^2}}(-\nabla \underline b^\pm, 1)$ denotes the outward pointing unit normal to $\Gamma^\pm$. In case one or both of $\Gamma^\pm$ is empty (infinite depth), \eqref{bc:b:pm} is  then replaced by the vanishing of $\nabla u$ at infinity. 

As in \cite{FlNg,NgPa}, we shall refer to the system \eqref{Omega+}-\eqref{bc:b:pm} as the two-phase Muskat problem. When the top phase corresponds to vacuum, i.e. $\mu^+=\rho^+=0$, the two-phase Muskat problem reduces to the one-phase Muskat problem and \eqref{u.n:pm} becomes
\begin{equation*}
p^-=\sigma \kappa(x)\quad\text{on}~\Sigma_t,\qquad \partial_t\eta=\sqrt{1+|\nabla \eta|^2}u^-\cdot \nu\vert_{\Sigma_t}.
\end{equation*}

In the following, we will make the following assumptions which are enough to use the trace theory developed in \cite{LeoTice} (see also \cite[Section 3 and Appendix A]{NgPa} for the notations and slight expansions):
\begin{enumerate}
\item $({\bf H}1)$: either $\Gamma^\pm=\emptyset$ or $\underline{b}^\pm\in \dot{W}^{1,\infty}(\mathbb{R}^d)$ and $\hbox{dist}(\Gamma^\pm,\Sigma)>0$.
\item $({\bf H}2)$: $\eta\in L^1\cap\dot{H}^1(\mathbb{R}^d)$. In particular, $\eta\in H^1\subset \widetilde{H}^\frac{1}{2}_\Theta$, the space of trace functions used in \cite{NgPa}.
\end{enumerate}

\subsection{A variational formulation for the velocities}

Starting from \eqref{Darcy:pm} and taking a divergence we get, with the incompressibility condition in \eqref{Darcy:pm}, that $u$ is related to the gradient of a harmonic function:
\begin{equation}\label{HarmonicProblem}
\begin{split}
u^\pm=-\frac{1}{\mu^\pm}\nabla q^\pm,\qquad\Delta q^\pm=0,\qquad q^\pm=p+\rho^\pm y,\qquad\text{in}~\Omega^\pm,\\\frac{1}{\mu^+}\partial_\nu q^+=\frac{1}{\mu^-}\partial_\nu q^-,\qquad
\lb q^\pm\rb= \sigma \kappa(x)+\lb \rho\rb\eta(x),\qquad\text{ on }~\Sigma_t,\\\partial_\nu q^\pm=0\qquad\text{ on }\Gamma^\pm.
\end{split}
\end{equation}

We can introduce the kinetic energy
\begin{equation*}
\begin{split}
\mathcal{E}(t):=\mu\int_{\Omega}\vert u(x,t)\vert^2dx=\mu^+\int_{\Omega^+}\vert u^+(x,t)\vert^2dx+\mu^-\int_{\Omega^-}\vert u^-(x,t)\vert^2dx
\end{split}
\end{equation*}
and using \eqref{HarmonicProblem} and \eqref{u.n:pm}, we also obtain the energy dissipation equality
\begin{equation*}
\begin{split}
\frac{d}{dt}\left(\frac{\lb\rho\rb}{2}\int_{\mathbb{R}^{d}}\eta^2(x,t)dx+\sigma\,\hbox{rArea}(\Sigma_t)\right)=-\mathcal{E}(t),
\end{split}
\end{equation*}
where we define the renormalized area to be
\begin{equation*}
\hbox{rArea}(\Sigma_t):=\int_{\mathbb{R}^d}\left\{\sqrt{1+\vert\nabla\eta(x)\vert^2}-1\right\}dx.
\end{equation*}
The monotonicity of the $L^2$ norm is a classical important result for the Muskat equation \cite{CCGS} (see also \cite{Alazard-Meunier-Smets:Lyapunov-Hele-Shaw-2020} for a list of other monotonous quantities). Using e.g. \cite{NgPa} and the notations therein, without surface tension, in infinite depth, and assuming a Rayleigh-Taylor condition, the energy dissipation can be recasted in the form
\begin{equation*}
\begin{split}
-\frac{d}{dt}\frac{\lb \rho\rb}{2}\int_{\mathbb{R}^{d}}\eta^2(x,t)dx=\frac{1}{\mu^-}\int_{\mathbb{R}^{d}}\eta\cdot G^-(\eta)f^- dx&= \frac{1}{\mu^++\mu^-}\Vert \vert\nabla\vert^\frac{1}{2}\eta\Vert_{L^2}^2+O(\eta^3),\\
\vert O(\eta^3)\vert&\lesssim F(\Vert\eta\Vert_{H^{2d+1}})\Vert \nabla\eta\Vert_{L^\infty}\Vert \eta\Vert_{H^\frac{1}{2}}^2
\end{split}
\end{equation*}
which would suggest that the dissipation rate for the Muskat equation may control derivatives of the solutions and be a useful quantity in the analysis. However, the surprising result of \cite[Section 2]{CCGS} (later expanded to $3d$ in \cite{CCGS2}) shows that, at least in certain settings (no surface tension, absence of boundary, equal viscosities), a much simpler (and weaker) lower bound, in fact, holds:
\begin{equation}\label{SurprisingUpperBound}
\begin{split}
0\le\mathcal{E}(t)\le C \Vert \eta\Vert_{L^1(\mathbb{R}^d)}
\end{split}
\end{equation}
The purpose of this note is to give an elementary, variational, proof of \eqref{SurprisingUpperBound} which extends this inequality to various settings.

\begin{theorem}\label{MainThm}

Assume $({\bf H}1)$ and $({\bf H}2)$, $\sigma \ge0$, $\lb \rho\rb>0$ and $\min\{\mu^-,\mu^+\}>0$, then there holds that
\begin{equation}\label{AlternativeBound}
\begin{split}
0\le \mathcal{E}(t)<\lb\rho\rb^2\left[\mu^+\Vert \eta_-(t)\Vert_{L^1(\mathbb{R}^d)}+\mu^-\Vert \eta_+(t)\Vert_{L^1(\mathbb{R}^d)}\right].
\end{split}
\end{equation}
Moreover, the constants are optimal.

\end{theorem}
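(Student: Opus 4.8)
\subsection*{Proof proposal}

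The plan is to recast $\mathcal{E}(t)$ as the minimum of a weighted Dirichlet energy and then obtain \eqref{AlternativeBound} by testing against one explicit competitor. Writing $u^\pm=-\tfrac{1}{\mu^\pm}\nabla q^\pm$, the kinetic energy is $\mathcal{E}=\sum_\pm \tfrac{1}{\mu^\pm}\int_{\Omega^\pm}|\nabla q^\pm|^2$, and the elliptic system \eqref{HarmonicProblem} is \emph{exactly} the Euler--Lagrange system of the functional $\tilde J[Q]:=\sum_\pm \tfrac{1}{\mu^\pm}\int_{\Omega^\pm}|\nabla Q^\pm|^2$ minimized over all finite–energy pairs $Q=(Q^+,Q^-)$ subject to the single Dirichlet constraint $Q^--Q^+=\sigma\kappa+\lb\rho\rb\eta$ on $\Sigma$; indeed the flux matching $\tfrac{1}{\mu^+}\partial_\nu Q^+=\tfrac{1}{\mu^-}\partial_\nu Q^-$ and the Neumann conditions on $\Gamma^\pm$ arise as the natural boundary conditions of this constrained problem. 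Consequently $q$ is the minimizer, $\mathcal{E}=\tilde J[q]$, and every admissible competitor $Q$ yields $\mathcal{E}\le \tilde J[Q]$. First I would record this variational identity carefully in the trace framework of \cite{LeoTice,NgPa}, so that the constraint and the integrations by parts are licit under $({\bf H}1)$--$({\bf H}2)$.

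The heart of the matter is the choice of competitor. Writing $y_\pm=\max(\pm y,0)$ and $\eta_\pm=\max(\pm\eta,0)$, I would take $Q^-(x,y)=\lb\rho\rb\min(y_+,\eta_+)$ and $Q^+(x,y)=\lb\rho\rb\min(y_-,\eta_-)$. One checks at once that $Q^--Q^+=\lb\rho\rb\eta$ on $\Sigma$, that each $Q^\pm$ is supported in the layer between $\Sigma$ and the reference plane $\{y=0\}$, and that there $\nabla Q^\pm$ is purely vertical of size $\lb\rho\rb$. Thus $\tilde J[Q]$ collapses to a one–dimensional $y$–integral in each column, and a direct computation reproduces the right–hand side of \eqref{AlternativeBound}: the norm $\|\eta_\pm\|_{L^1}$ appears precisely because the layer has height $|\eta(x)|$, while its weight is governed by the viscosity of the phase filling that layer (the lower phase where $\eta>0$, the upper phase where $\eta<0$). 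It deserves emphasis that $({\bf H}2)$, namely $\eta\in L^1$, is exactly what renders this competitor of finite energy, which accounts for the otherwise surprising appearance of an $L^1$ norm in \eqref{SurprisingUpperBound}.

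Two points require genuine care. The minor one is the presence of rigid boundaries: since $\mathrm{dist}(\Gamma^\pm,\Sigma)>0$ by $({\bf H}1)$, the competitor above vanishes identically near $\Gamma^\pm$, so the no–penetration conditions hold for free and no modification is needed. The serious difficulty—and the step I expect to be the true obstacle—is surface tension: the admissible jump in \eqref{HarmonicProblem} is $\sigma\kappa+\lb\rho\rb\eta$, whereas the competitor realizes only the buoyancy part $\lb\rho\rb\eta$. To keep the bound free of $\sigma$ one must carry the extra jump $\sigma\kappa$ at no leading cost, and here I would exploit that $\kappa=-\mathrm{div}\bigl(\nabla\eta/\sqrt{1+|\nabla\eta|^2}\bigr)$ is a tangential divergence: adding a correction whose trace jump equals $\sigma\kappa$ and integrating by parts against the vertical test gradient should transfer the curvature back onto the graph $\eta$, after which the sign of the resulting terms must be controlled. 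This is where the elementary computation of the second paragraph has to be supplemented, and I expect it to be the crux.

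Finally, the strict inequality and the optimality of the constants. Strictness is automatic from the variational principle: the competitor above has a gradient discontinuity across $\{y=0\}$ and does not match the flux along $\Sigma$, hence it is never the (unique) minimizer, so $\mathcal{E}=\tilde J[q]<\tilde J[Q]$, which is the right–hand side of \eqref{AlternativeBound}. To see that the constant cannot be lowered, I would exhibit a family of increasingly steep interfaces along which the minimizing potential concentrates its gradient in the layer and becomes essentially vertical there, so that $\mathcal{E}$ divided by the right–hand side of \eqref{AlternativeBound} tends to $1$; thus the competitor is asymptotically the true minimizer and the bound is sharp.
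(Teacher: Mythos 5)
Your overall strategy is exactly the paper's: recast $\mathcal{E}$ as the minimum of a viscosity-weighted Dirichlet energy over pairs with prescribed jump across $\Sigma$, test with the explicit piecewise-linear competitor supported in the layer between $\Sigma$ and $\{y=0\}$ (your truncated $Q^\pm$ coincide with the paper's competitor \eqref{Competitor} on the physical domains $\Omega^\pm$), get strictness from uniqueness of the minimizer, and get sharpness from a family of tall/steep interfaces. However, two substantive issues remain. The first concerns the viscosity weights. Your functional $\tilde J$ carries the weights $1/\mu^\pm$, and this is indeed the normalization for which the natural boundary conditions of the constrained problem are those of \eqref{HarmonicProblem} and for which the minimum equals $\mathcal{E}=\sum_\pm\frac{1}{\mu^\pm}\int_{\Omega^\pm}\vert\nabla q^\pm\vert^2$. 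But then the competitor computation yields
\begin{equation*}
\mathcal{E}<\tilde J[Q]\le \lb\rho\rb^2\left[\frac{1}{\mu^+}\Vert\eta_-\Vert_{L^1}+\frac{1}{\mu^-}\Vert\eta_+\Vert_{L^1}\right],
\end{equation*}
with \emph{reciprocal} weights: this is not the right-hand side of \eqref{AlternativeBound}, and your claim that the computation ``reproduces'' it silently swaps $\mu^\pm$ for $1/\mu^\pm$. You have in fact run into an inconsistency of the paper itself: $\mathbb{E}$ in \eqref{MinPb} carries the weights $\mu^\pm$, yet Lemma \ref{UniquenessMinimizer} asserts its minimizer solves \eqref{HarmonicProblem}; the Euler--Lagrange flux condition for $\mathbb{E}$ is $\mu^+\partial_\nu f^+=\mu^-\partial_\nu f^-$, incompatible with \eqref{HarmonicProblem} unless $\mu^+=\mu^-$, and since $\mathcal{E}=\sum_\pm\mu^\pm\int\vert u^\pm\vert^2=\sum_\pm\frac{1}{\mu^\pm}\int\vert\nabla q^\pm\vert^2$, only the $1/\mu^\pm$-weighted functional can have $\mathcal{E}$ as its minimum. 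You should state the bound you actually prove and reconcile it with the statement rather than assert agreement. (A minor related slip: admissibility requires only the jump condition on $\Sigma$, so the competitor need not satisfy no-penetration at $\Gamma^\pm$ --- those are natural, not essential, conditions --- and your claim that it vanishes near $\Gamma^\pm$ is not guaranteed by $({\bf H}1)$, since $\underline{b}^\pm$ may cross $\{y=0\}$.)

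The second issue is surface tension, which you rightly single out as the crux and then leave open: as written, your argument proves the bound only for $\sigma=0$, since for $\sigma>0$ your competitor is not admissible for the constraint $\lb Q\rb=\sigma\kappa+\lb\rho\rb\eta$. Moreover, no curvature-correction of the kind you sketch can close this gap, because the inequality itself fails when $\sigma>0$: taking $\eta$ of small amplitude $\epsilon$ and high frequency $N$ with slope $\epsilon N\to 0$, the jump is dominated by $\sigma\kappa\approx\sigma\epsilon N^2\sin(Nx)$, and the (linearized) solution of \eqref{HarmonicProblem} gives, per unit horizontal length, $\mathcal{E}\approx\frac{\sigma^2\epsilon^2N^5}{2(\mu^++\mu^-)}$ while $\Vert\eta\Vert_{L^1}\approx\frac{2\epsilon}{\pi}$, so the ratio $\mathcal{E}/\Vert\eta\Vert_{L^1}$ blows up as $N\to\infty$ (e.g.\ $\epsilon=N^{-2}$). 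The paper does not solve this difficulty either --- it sidesteps it by defining $\mathcal{A}$ with the jump $\lb\rho\rb\eta$ alone, so its proof, like yours, is genuinely a $\sigma=0$ proof. The correct resolution is to restrict the competitor argument (and the statement) to $\sigma=0$, not to search for a supplementary correction carrying the jump $\sigma\kappa$ at negligible cost.
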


In the inequality above and in the rest of the paper, we define the positive and negative part of a function with lower subscripts: $\eta_-:=\min\{0,\eta\}$, $\eta_+:=\max\{\eta,0\}$.

\begin{remark}
\begin{enumerate}
\item Although the constants in \eqref{AlternativeBound} are optimal, the inequality is never saturated, unless $y=0$.
\item A simple variation of the analysis allows to consider piecewise smooth interfaces and thus allows corners, which is significant in view of \cite{GGNP}.
\item Simple variations allow to consider other domains than $\mathbb{R}^d$, such as $\mathbb{T}^d$.
\end{enumerate}
\end{remark}

We finish with some open questions:
\begin{enumerate}
\item It would be interesting to understand the minimal analytic setting under which Theorem \ref{MainThm} holds. In particular, whether in the infinite depth setting, one can extend the result to $\eta\in L^1\cap \dot{H}^\frac{3}{2}$, which seems to be optimal in view of recent advances in local well-posedness theory in \cite{Alazard-Nguyen:Endpoint-Sobolev-Muskat-2020,Alazard-Nguyen:Muskat-Critical-II-2021}. In case $\Gamma^\pm\ne\emptyset$, it would be interesting to know the largest space allowable for $\eta$ and $\underline{b}^\pm$ in order for the boundary problem to be well defined.
\item In case $x\in\mathbb{R}$ and $\eta$ has different limits at $\pm\infty$ (as e.g. \cite{Deng-Lei-Lin:2d-muskat-monotone-data-2017}), it seems that the Dirichlet energy is unbounded for infinite bottom, but can be finite in case $\Vert \underline{b}^+-\underline{b}^-\Vert_{L^\infty}<\infty$. In this case, it would be interesting to investigate whether there is an analogue of Theorem \ref{MainThm} involving some renormalization of $\eta$. 
\item In a similar spirit, it would be interesting to extend the above results to the case when the interface is not a graph, especially since such interfaces can dynamically form \cite{CCFG}, and whether there is a connection with the problematic local well-posedness theory.
\item It would be great to clarify what is true and what is not in the one-fluid case, and whether the two-fluid problem is ``more (or less) stable'' in a sense to be made precise. However, we refer to \cite{Ng} which may suggest that stronger bounds than \eqref{SurprisingUpperBound} hold in the one-fluid case.

\end{enumerate}

\section{A minimization problem}

\subsection{The minimization problem}

All of our considerations are instantaneous; from now on, we will fix an interface $\eta\in W^{1,\infty}(\mathbb{R}^d)$.

We would like to express $\mathcal{E}$ as the solution of the following minimization problem:
\begin{equation}\label{MinPb}
\begin{split}
\mathfrak{m}:=\min\{\mathbb{E}[f^+,f^-],\,\, (f^+,f^-)\in\mathcal{A}\}
\end{split}
\end{equation}
where the set of admissible pairs is given by
\begin{equation*}
\begin{split}
\mathcal{A}:=\{(f^+,f^-)\in\dot{H}^1(\Omega^+)\times\dot{H}^1(\Omega^-):\,\,\lb f\rb=\eta(x)\lb\rho\rb\}
\end{split}
\end{equation*}
and the energy is given by
\begin{equation*}
\mathbb{E}[f^+,f^-]:=\mu^+\int_{\Omega^+}\vert \nabla_{x,y} f^+(x,y,t)\vert^2dxdy+\mu^-\int_{\Omega^-}\vert \nabla_{x,y} f^-(x,y,t)\vert^2dxdy.
\end{equation*}

\begin{lemma}\label{StructureOfA}

Assume $({\bf H}1)$ and $({\bf H}2)$. The affine set $\mathcal{A}$ is well defined, non-empty, and every bounded sequence in $\mathcal{A}$ for the natural semi-norm has a subsequence that converges weakly.

\end{lemma}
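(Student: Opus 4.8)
The plan is to verify the three assertions in turn, the essential difficulty being the homogeneous (semi-normed) nature of $\dot H^1$, which prevents a direct appeal to weak compactness in a Hilbert space. For \emph{well-definedness} I would invoke the trace theory of \cite{LeoTice,NgPa}: under $({\bf H}1)$ the trace operator $\mathrm{tr}_\Sigma\colon \dot H^1(\Omega^\pm)\to\widetilde H^{\frac12}_\Theta$ is well defined and satisfies the homogeneous bound $\|\mathrm{tr}_\Sigma f\|_{\widetilde H^{\frac12}_\Theta}\lesssim \|\nabla_{x,y} f\|_{L^2(\Omega^\pm)}$. By $({\bf H}2)$, $\eta\in H^1\subset\widetilde H^{\frac12}_\Theta$, so $\eta\lb\rho\rb\in\widetilde H^{\frac12}_\Theta$ and the constraint $\lb f\rb=\mathrm{tr}_\Sigma f^- - \mathrm{tr}_\Sigma f^+=\eta\lb\rho\rb$ is a well-posed identity in $\widetilde H^{\frac12}_\Theta$.

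For \emph{non-emptiness}, I would use the bounded right inverse of the trace furnished by \cite{NgPa}: choose $f^+\equiv0$, whose trace vanishes, and let $f^-\in\dot H^1(\Omega^-)$ be an extension of $\eta\lb\rho\rb$, so that $\lb f\rb=\eta\lb\rho\rb$ and $(f^+,f^-)\in\mathcal A$. No condition is imposed on $\Gamma^\pm$ for admissible pairs, the Neumann conditions in \eqref{HarmonicProblem} being natural rather than essential.

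The main point is \emph{weak compactness}. Given a sequence bounded in the natural seminorm, i.e. with $\|\nabla_{x,y} f_n^\pm\|_{L^2(\Omega^\pm)}$ uniformly bounded, I would pass to the level of gradients, which live in the genuine Hilbert space $L^2(\Omega^\pm;\mathbb R^{d+1})$. By reflexivity, after extracting a subsequence, $\nabla_{x,y}f_n^\pm\rightharpoonup G^\pm$ weakly in $L^2$. Since the $\Omega^\pm$ are simply connected graph domains, the space of gradient fields $\{\nabla_{x,y} g: g\in\dot H^1(\Omega^\pm)\}$ is a closed, hence weakly closed, linear subspace of $L^2$; therefore $G^\pm=\nabla_{x,y}f^\pm$ for some $f^\pm\in\dot H^1(\Omega^\pm)$, which is the desired weak limit in the seminorm. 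To see that the constraint survives I would use that $\mathrm{tr}_\Sigma$, being bounded and linear for the seminorm by the estimate above, is weakly continuous; hence $\lb f\rb=\lim_n\lb f_n\rb=\eta\lb\rho\rb$, so the limit lies in $\mathcal A$ and $\mathcal A$ is in fact weakly closed.

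The step I expect to be the main obstacle is precisely the compatibility of the trace with the homogeneous seminorm topology: because $\dot H^1$ functions are defined only modulo constants, one must know that the trace is controlled by $\|\nabla_{x,y} f\|_{L^2}$ alone, so that constants have vanishing trace in $\widetilde H^{\frac12}_\Theta$ and the jump condition is stable under seminorm-bounded weak limits. This is exactly where the specialized trace theory of \cite{LeoTice,NgPa}, together with the geometric hypotheses in $({\bf H}1)$ (finite distance of $\Sigma$ to $\Gamma^\pm$, or the decay afforded by $\eta\in L^1$ in infinite depth), enter in an essential way.
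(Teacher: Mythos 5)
Your proof follows the same architecture as the paper's: well-definedness of $\mathcal{A}$ from the Leoni--Tice/Nguyen--Pausader trace theory, non-emptiness by taking $f^+\equiv 0$ together with an extension of $\lb\rho\rb\eta$ into $\Omega^-$, and weak sequential compactness from Hilbert-space structure. Two implementation differences are worth noting. For non-emptiness, the paper avoids invoking surjectivity of the trace operator by exhibiting the explicit competitor $f^-_0(x,y)=\lb\rho\rb\,\eta(x)\,\chi(r^{-1}(y-\eta(x)))\mathfrak{1}_{\Omega^-}$ with $0<r<\mathrm{dist}(\Sigma,\Gamma^-)$, whose Dirichlet energy is finite thanks to $({\bf H}1)$, $({\bf H}2)$ and the standing assumption $\eta\in W^{1,\infty}$; your appeal to a bounded right inverse of the trace is legitimate (it is part of the cited theory) but heavier than needed. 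For compactness, the paper simply cites \cite[Proposition 3.3]{NgPa}, which endows $\dot H^1(\Omega^\pm)$ with a Hilbert-space structure, together with continuity of $Tr^\pm$, so that $\mathcal{A}$ is a closed affine subset of a Hilbert space and the conclusion is immediate; you instead rebuild this by hand at the level of gradients: weak compactness in $L^2(\Omega^\pm;\mathbb{R}^{d+1})$, the fact that curl-free fields on the simply connected graph domains $\Omega^\pm$ are gradients and form a weakly closed subspace, and weak continuity of the trace. That is a correct and more self-contained route for this step.

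There is, however, one genuine soft spot, and it sits exactly at the point you flag at the end. If, as you say, $\dot H^1$ elements are determined only modulo constants and the trace estimate involves only $\Vert\nabla_{x,y}f\Vert_{L^2}$, then the potentials $f^\pm$ you recover from the weak limits $G^\pm$ are defined only up to additive constants, and your weak-continuity argument can only give $\lb f\rb=\eta\lb\rho\rb$ up to an additive constant: weak limits in a trace space whose seminorm vanishes on constants are unique only modulo constants, so a priori $\lb f\rb=\eta\lb\rho\rb+c$. (Relatedly, ``constants have vanishing trace'' is not correct as stated: the trace of a constant is that constant; what vanishes is its screened seminorm, which is precisely the source of the ambiguity.) The repair is one line: if $\lb f\rb=\eta\lb\rho\rb+c$, replace $f^+$ by $f^++c$, which changes neither the gradients nor the weak convergence and places the limit in $\mathcal{A}$. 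The paper's citation of \cite[Proposition 3.3]{NgPa} is what lets it bypass this bookkeeping altogether: once $\dot H^1(\Omega^\pm)$ is treated as an honest Hilbert space and $Tr^\pm$ as a bounded operator on it, closedness of the affine constraint under weak limits requires no further adjustment.
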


\begin{proof}[Proof of Lemma \ref{StructureOfA}]

Define $f^+_0(x,y)=0$ and $f^-_0(x,y)=\lb\rho\rb \eta(x)\chi(r^{-1}(y-\eta(x)))\mathfrak{1}_{\Omega^-}$, where $0<r<\hbox{dist}(\Sigma,\Gamma^-)$ and $\chi\in C^\infty_c(-1,1)$ is such that $\chi\equiv 1$ in a neighborhood of $0$. We have that $(f^+_0,f^-_0)\in\mathcal{A}$, which is then non-empty. It is proved in \cite[Proposition 3.3.]{NgPa} that $\dot{H}^1(\Omega^\pm)$ is a Hilbert space. Using \cite[Theorem 5.1]{LeoTice} or \cite[Theorem A.1.]{NgPa}, we see that $Tr^\pm: f^\pm\mapsto f^\pm(x,\eta(x))$ is well defined and continuous from $\dot{H}^1(\Omega^\pm)$ into $\widetilde{H}^\frac{1}{2}_\Theta(\mathbb{R}^d)$. Thus we see that $\mathcal{A}\subset \dot{H}^1(\Omega^+)\times\dot{H}^1(\Omega^-)$ is a closed affine subset of a Hilbert space.

%

\end{proof}

\begin{lemma}\label{UniquenessMinimizer}

Assuming $({\bf H}1)$ and $({\bf H}2)$, there exists a unique minimizer of \eqref{MinPb}, $(q^+,q^-)$, and in addition, $q^\pm$ satisfies \eqref{HarmonicProblem}.

\end{lemma}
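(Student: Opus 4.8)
The plan is to prove existence and uniqueness of the minimizer via the standard direct method (Hilbert space projection onto a closed affine subspace), and then to identify the Euler–Lagrange equations of the minimizer with the elliptic transmission problem \eqref{HarmonicProblem}. The structural groundwork is already laid by Lemma \ref{StructureOfA}: the admissible set $\mathcal{A}$ is a closed, nonempty affine subset of the Hilbert space $\dot{H}^1(\Omega^+)\times\dot{H}^1(\Omega^-)$, and the energy $\mathbb{E}$ is (a multiple of) the squared norm induced by the inner product $\mu^+\int_{\Omega^+}\nabla f^+\cdot\nabla g^+ + \mu^-\int_{\Omega^-}\nabla f^-\cdot\nabla g^-$, which is genuinely an inner product because $\min\{\mu^-,\mu^+\}>0$. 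Minimizing a coercive strictly convex quadratic over a closed convex set in a Hilbert space is exactly the projection theorem, so existence and uniqueness are immediate.

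First I would make the convexity argument precise. Let me take a minimizing sequence $(f_n^+,f_n^-)\in\mathcal{A}$. Since $\mathbb{E}$ equals the squared Hilbert seminorm, the sequence is bounded, so by Lemma \ref{StructureOfA} it has a weakly convergent subsequence with some limit $(q^+,q^-)$. The affine set $\mathcal{A}$ is closed and convex, hence weakly closed, so the limit lies in $\mathcal{A}$; weak lower semicontinuity of the norm then gives $\mathbb{E}[q^+,q^-]\le\liminf\mathbb{E}[f_n^+,f_n^-]=\mathfrak{m}$, so $(q^+,q^-)$ is a minimizer. Uniqueness follows from strict convexity: if two minimizers existed, their midpoint would lie in $\mathcal{A}$ and by the parallelogram identity have strictly smaller energy, a contradiction.

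Next I would derive the Euler–Lagrange characterization. The key observation is that the difference of any two admissible pairs lies in the linear space $\mathcal{A}_0:=\{(g^+,g^-)\in\dot{H}^1(\Omega^+)\times\dot{H}^1(\Omega^-):\lb g\rb=0\}$, i.e.\ $g^+$ and $g^-$ share the same trace on $\Sigma$. Minimality at $(q^+,q^-)$ forces the first variation to vanish: for every $(g^+,g^-)\in\mathcal{A}_0$,
\begin{equation*}
\mu^+\int_{\Omega^+}\nabla q^+\cdot\nabla g^+\,dxdy+\mu^-\int_{\Omega^-}\nabla q^-\cdot\nabla g^-\,dxdy=0.
\end{equation*}
I would then interpret this weak identity. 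Taking $g^\pm$ compactly supported in the interior of each $\Omega^\pm$ (so $g^\mp=0$) yields $\Delta q^\pm=0$ in $\Omega^\pm$. Taking $g^\pm$ supported near $\Gamma^\pm$ yields the Neumann condition $\partial_\nu q^\pm=0$ there. Finally, integrating by parts and using that $g^+$ and $g^-$ have a common, otherwise arbitrary, trace $h$ on $\Sigma$, the interface contribution becomes $\int_\Sigma(\tfrac{1}{\mu^+}\partial_\nu q^+-\tfrac{1}{\mu^-}\partial_\nu q^-)\,h$ after the appropriate normalization, which must vanish for all $h$, giving the flux-matching condition $\tfrac{1}{\mu^+}\partial_\nu q^+=\tfrac{1}{\mu^-}\partial_\nu q^-$. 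The remaining jump condition $\lb q\rb=\sigma\kappa+\lb\rho\rb\eta$ is built into the constraint defining $\mathcal{A}$ together with the interpretation of $q^\pm$ as $p^\pm+\rho^\pm y$.

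The main obstacle is the rigorous justification of the boundary integration by parts, since $q^\pm\in\dot{H}^1(\Omega^\pm)$ only, and the traces live in the nonstandard space $\widetilde{H}^\frac{1}{2}_\Theta$. I would lean on the trace theory of \cite{LeoTice} and \cite{NgPa} already invoked in Lemma \ref{StructureOfA} to give meaning to $\partial_\nu q^\pm$ as elements of the relevant dual trace space and to validate the Green's identity in the weak sense; the flux-matching and Neumann conditions should then be read as equalities in that dual space. This weak interpretation is exactly what the variational formulation produces, and it is the natural sense in which \eqref{HarmonicProblem} holds under hypotheses $({\bf H}1)$ and $({\bf H}2)$.
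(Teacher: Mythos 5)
Your proposal is correct and follows essentially the same route as the paper's proof: existence via a minimizing sequence and the weak compactness of Lemma \ref{StructureOfA}, uniqueness from strict convexity of $\mathbb{E}$ on the affine set $\mathcal{A}$, and the equations in \eqref{HarmonicProblem} from localized variations (interior for harmonicity, near $\Gamma^\pm$ for the Neumann condition, near $\Sigma$ for the flux condition); the paper is simply terser and defers the computations to \cite[Section 3.1.]{NgPa}.

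One concrete point is worth flagging, even though it originates in the paper's formulation rather than in your argument. If you carry out the interface integration by parts literally for $\mathbb{E}$ as defined (weights $\mu^\pm$), the vanishing of the first variation against a common trace $h$ yields
\begin{equation*}
\int_{\Sigma} \left(\mu^-\partial_\nu q^- - \mu^+\partial_\nu q^+\right) h \, dS = 0,
\end{equation*}
i.e. $\mu^+\partial_\nu q^+=\mu^-\partial_\nu q^-$ on $\Sigma$, and no ``appropriate normalization'' turns this into the condition $\frac{1}{\mu^+}\partial_\nu q^+=\frac{1}{\mu^-}\partial_\nu q^-$ stated in \eqref{HarmonicProblem}. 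To obtain \eqref{HarmonicProblem}, together with the identification $\mathfrak{m}=\mathcal{E}$ on which the proof of Theorem \ref{MainThm} relies, the weights must be made consistent: either define $\mathbb{E}$ with weights $1/\mu^\pm$ and keep the constraint $\lb f\rb=\lb\rho\rb\eta$, or keep $\mathbb{E}$ as written but minimize over velocity potentials $f^\pm=q^\pm/\mu^\pm$ with the constraint $\mu^-f^--\mu^+f^+=\lb\rho\rb\eta$; in either case the Euler--Lagrange interface condition becomes continuity of the normal velocity, as desired. (A similar remark applies to your claim that the jump condition $\lb q\rb=\sigma\kappa+\lb\rho\rb\eta$ is ``built into'' $\mathcal{A}$: the constraint in $\mathcal{A}$ omits the $\sigma\kappa$ term, so this is accurate only for $\sigma=0$.) Since the paper's own proof glosses over the same discrepancy, this does not distinguish your argument from the paper's, but a complete write-up should resolve the weights explicitly rather than appeal to an unspecified normalization.
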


\begin{proof}[Proof of Lemma \ref{UniquenessMinimizer}]

We refer to \cite[Section 3.1.]{NgPa} for similar computations. Since $\mathcal{A}$ is affine and $\mathbb{E}$ is strictly convex, uniqueness is direct. Considering a subsequence and using Lemma \ref{StructureOfA}, we obtain the existence. Harmonicity inside the domain follows by considering variations supported inside $\Omega^\pm$, and the Neumann conditions at $\Gamma^\pm$ and at $\Sigma$ follow similarly by considering perturbations supported near the top and bottom, and the interface, respectively. 

\end{proof}

\subsection{Comparison principle and Proof of the main Theorem}

\begin{proof}[Proof of Theorem \ref{MainThm}]
Once we have established that the velocity field $u$ comes from a minimizer, it suffices to find competitors in $\mathcal{A}$. A natural example is given by
\begin{equation}\label{Competitor}
\begin{split}
f^+(x,y):=\lb \rho\rb\max\{-y,0\},\qquad f^-(x,y):=\lb\rho\rb\max\{0,y\}.
\end{split}
\end{equation}
The competitor $(f^+,f^-)$ is Lipshitz and therefore admissible and we conclude that
\begin{equation*}
\begin{split}
\mathcal{E}<\mathbb{E}[f^+,f^-]=\lb\rho\rb^2\left[\mu^+\Vert \eta_-\Vert_{L^1}+\mu^-\Vert \eta_+\Vert_{L^1}\right]
\end{split}
\end{equation*}
which gives \eqref{AlternativeBound}. The inequality above is strict by uniqueness of the minimizer since $(f^+,f^-)$ are not harmonic. The optimality of the bound \eqref{AlternativeBound} follows from Lemma \ref{OptimalConstantLem}.

\end{proof}

\subsection{Extension to the one-fluid case}

In the one-fluid setting, the same argument gives a variational interpretation for $\mathcal{E}$. However, in this case, the competitor \eqref{Competitor} only works for a wave of elevation: $\eta\ge0$ (or a depression wave $\eta\le 0$). In particular, this gives 
\begin{equation*}
\begin{split}
\mathcal{E}_{1f}<\lb\rho\rb^2\mu\int_{\mathbb{R}^{d}} \vert\eta\vert dx.
\end{split}
\end{equation*}
In fact, this can also be obtained from the observation that\footnote{We thank Huy Nguyen for this observation.} \cite[Lemma 4.2.]{NgPa} $G(\eta)\eta<1$.

\subsection{Optimality}\label{Optimality}

One may wonder whether \eqref{AlternativeBound} gives a bound which is optimal in any way. Simple scaling arguments show that it can be saturated, at least up to a multiplicative constant.

\begin{lemma}\label{OptimalConstantLem}

Assume $({\bf H}1)$ and $({\bf H}2)$. There exists a sequence of smooth, compactly supported functions $\eta_n$ for which the inequality in \eqref{AlternativeBound} is saturated in the sense that:
\begin{equation*}
\mathcal{E}[\eta_n]/(\mu^+\Vert (\eta_n)_-\Vert_{L^1}+\mu^-\Vert(\eta_n)_+\Vert_{L^1})\to \lb\rho\rb^2.
\end{equation*}

\end{lemma}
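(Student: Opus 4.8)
The plan is to show that the energy $\mathcal{E}[\eta]$, which by the previous lemmas equals the minimum $\mathfrak{m}$ of the variational problem \eqref{MinPb}, can be made to approach the upper bound $\lb\rho\rb^2(\mu^+\Vert\eta_-\Vert_{L^1}+\mu^-\Vert\eta_+\Vert_{L^1})$ arbitrarily closely. Since the upper bound was obtained by testing with the explicit competitor \eqref{Competitor}, the task is to construct a sequence $\eta_n$ for which this particular competitor becomes (asymptotically) the true minimizer, i.e.\ for which the harmonic minimizer $q^\pm$ is essentially forced to coincide with $f^\pm(x,y)=\lb\rho\rb\max\{\mp y,0\}$. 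The natural mechanism for this is \emph{scaling}: the competitor $f^\pm$ depends only on the vertical variable $y$, so it is exactly harmonic away from the interface $\{y=0\}$ — the only reason it fails to be the minimizer is the curvature of $\Sigma$ near $\{y=\eta(x)\}$ and the influence of the boundaries $\Gamma^\pm$. Both of these effects should vanish in a suitable limit.

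The key steps I would carry out are as follows. First, fix a fixed profile $\eta$ (smooth, compactly supported, and say of a single sign or a fixed shape), and introduce a rescaling that flattens the interface while keeping it nontrivial — for instance, horizontal dilation $\eta_n(x):=\varepsilon_n\,\eta(x/\lambda_n)$ with $\varepsilon_n\to 0$, so that $\Vert\nabla\eta_n\Vert_{L^\infty}\to 0$ and the interface becomes asymptotically flat. As the interface flattens, the domains $\Omega^\pm_n$ converge (after normalization) to the half-spaces $\{y>0\}$ and $\{y<0\}$, and the rigid boundaries $\Gamma^\pm$ recede to infinite distance relative to the amplitude, so that by $({\bf H}1)$ they play no role in the limit. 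In this flat regime the minimizer of $\mathbb{E}$ with jump condition $\lb f\rb=\lb\rho\rb\eta_n$ across a nearly horizontal interface is well approximated by the one-dimensional (in $y$) profile, which is precisely the competitor \eqref{Competitor}. The second step is therefore a quantitative comparison: I would bound the gap $\mathbb{E}[f^+,f^-]-\mathfrak{m}$ from above by the energy of the \emph{difference} between the competitor and the true minimizer, and show that this correction is of lower order than $\Vert\eta_n\Vert_{L^1}$, so that the ratio $\mathcal{E}[\eta_n]/(\mu^+\Vert(\eta_n)_-\Vert_{L^1}+\mu^-\Vert(\eta_n)_+\Vert_{L^1})$ converges to $\lb\rho\rb^2$.

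To make the lower bound (the hard direction) work, I would produce an explicit \emph{dual} competitor or a direct lower estimate on $\mathfrak{m}$: since $\mathfrak{m}$ is a minimum, the upper bound $\mathfrak{m}\le\mathbb{E}[f^+,f^-]$ is automatic, and what must be shown is that $\mathfrak{m}$ is not much \emph{smaller}. The cleanest route is a slicing argument: on the set where $\eta_n>0$, for each horizontal position $x$ any admissible $f^-$ must rise by $\lb\rho\rb\eta_n(x)$ over a vertical extent comparable to $\eta_n(x)$ as $n\to\infty$, and by Cauchy--Schwarz applied to the vertical derivative along each vertical line one recovers $\int|\partial_y f^-|^2\,dy\gtrsim \lb\rho\rb^2\eta_n(x)$ with the constant approaching equality as the slope flattens; integrating in $x$ reproduces the claimed constant.

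\emph{The main obstacle} I expect is controlling the boundary and curvature corrections uniformly — specifically, ruling out the possibility that the true minimizer $q^\pm$ ``cheats'' by using horizontal derivatives to lower the energy below the one-dimensional value. This requires showing that the horizontal gradient contribution to $\mathfrak{m}$ is negligible in the flat limit, which is where the smallness of $\Vert\nabla\eta_n\Vert_{L^\infty}$ must be exploited carefully, and where the finite-depth hypothesis $({\bf H}1)$ enters to guarantee that the lower boundary does not interfere as the amplitude shrinks.
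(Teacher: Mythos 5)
Your proposal is built on a premise that is in fact backwards: the flattening limit $\Vert\nabla\eta_n\Vert_{L^\infty}\to 0$ is the regime where \eqref{AlternativeBound} is \emph{least} saturated, not most. The competitor \eqref{Competitor} is not ``asymptotically harmonic'' for flat interfaces: its distributional Laplacian is a surface measure on $\{y=0\}$ no matter how flat $\Sigma$ is, and for nearly flat interfaces the true minimizer does something completely different --- it spreads the jump $\lb\rho\rb\eta$ over a vertical scale comparable to the \emph{horizontal wavelength} $\lambda$ rather than to the amplitude $\varepsilon$. Quantitatively, for $\eta_n(x)=\varepsilon_n\eta(x/\lambda_n)$ the dissipation behaves like the quadratic form recalled in the introduction, $\mathcal{E}\sim \frac{\lb\rho\rb^2}{\mu^++\mu^-}\Vert\,\vert\nabla\vert^{1/2}\eta_n\Vert_{L^2}^2\sim \varepsilon_n^2\lambda_n^{d-1}$, while $\mu^-\Vert(\eta_n)_+\Vert_{L^1}\sim\varepsilon_n\lambda_n^{d}$, so the ratio in the Lemma is of order $\varepsilon_n/\lambda_n=\Vert\nabla\eta_n\Vert_{L^\infty}\to 0$, not $\to\lb\rho\rb^2$. (An extreme illustration: for a very wide bump of fixed height $h$, the minimizer is locally constant, $q^-\approx\lb\rho\rb h$, $q^+\approx 0$, over most of the bump, so $\mathcal{E}$ stays bounded while $\Vert\eta\Vert_{L^1}\to\infty$; this is exactly the Remark in the paper that $\mathcal{E}<\infty$ can coexist with $\Vert\eta\Vert_{L^1}=\infty$.) The saturating regime is the opposite, steep one, and this is what the paper uses: a (smoothed) step $\eta=H\cdot 1_{[0,L]}$ with $H/L\to\infty$, i.e.\ a tall narrow column. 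There the jump $\lb\rho\rb y$ prescribed along the two nearly vertical sides of the column forces $q^-\approx\lb\rho\rb y$ throughout the column --- for $q^+$ to carry a fixed fraction of that jump it would have to be of size $\sim\lb\rho\rb H$ on a region of diameter $\sim H$, costing energy of order $H^2\gg LH$ --- and the paper confirms this by an explicit Fourier-series computation of the mixed Dirichlet--Neumann harmonic function on the rectangle $R=[0,L]\times[0,H]$, whose Dirichlet energy is $LH(1+o_{H\to\infty}(1))$, followed by a sandwiching argument to pass to smooth $\eta$.

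The second, related, gap is the slicing argument for the lower bound: it is not valid, in any regime, as stated. Admissibility in $\mathcal{A}$ only constrains the \emph{jump} $f^--f^+$ on $\Sigma$; no Dirichlet condition pins the values of $f^-$ itself anywhere (the conditions at $\Gamma^\pm$, or at infinity, concern only $\nabla f^\pm$). So along a vertical line through a point where $\eta_n(x)>0$, an admissible $f^-$ need not ``rise by $\lb\rho\rb\eta_n(x)$ over a vertical extent comparable to $\eta_n(x)$'': it can equal $\lb\rho\rb\eta_n(x)$ along the entire line (with the jump carried by $f^+$, or with the transition to $0$ occurring horizontally far away), or it can transition over a depth $\sim\lambda_n\gg\varepsilon_n$, which is precisely what the minimizer does in your flat regime and why the energy there is quadratic rather than linear in the amplitude. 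Consequently the claimed per-slice bound $\int\vert\partial_yf^-\vert^2\,dy\gtrsim\lb\rho\rb^2\eta_n(x)$ fails, and with it the whole lower-bound half of your argument; only the trivial half ($\mathfrak{m}\le\mathbb{E}[f^+,f^-]$, which is the content of Theorem \ref{MainThm}) survives. A correct proof must exploit a geometry in which horizontal escape is expensive --- the tall-column geometry of the paper, where the comparison with an explicitly computable mixed boundary-value problem on $R$ supplies the missing lower bound.
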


\begin{remark}
	It is interesting to note that one can construct scenarios for which  $\mathcal{E}<\infty$ even though $\|\eta\|_{L^1}=\infty$.
\end{remark}

\begin{proof}[Proof of Lemma \ref{OptimalConstantLem}]
We will prove this in the case of a surface of elevation: $\eta\ge0$. The case $\eta\le 0$ follows similarly. In this case, we will only consider one domain and we define the Dirichlet energy of a function $\eta$ on a domain $\Omega$ to be $\mathcal{D}_\Omega(\eta):=\int_\Omega\vert\nabla\eta\vert^2dxdy$.

We first study the model case of a step function $\eta=\sigma:=H\cdot\mathfrak{1}_{\{0\le x\le L\}}$, for some constant height $H$, when the computations can be carried out explicitly and extend by continuity of the Dirichlet energy under deformation of the domain. The arguments are elementary but we give the details for the sake of completeness. We consider the rectangle $R:=\Omega^-\cap\{y\ge0\}=\{0\le x\le L,\, 0\le y\le H\}$, and we also partition the boundary into $\partial R=\mathcal{T}\cup\mathcal{B}$ with ``free'' boundary at the bottom $\mathcal{B}:=\{0\le x\le L,\,\, y=0\}$. Here $\mathcal{T}$ denotes the union of the top with the two vertical sides of the rectangle.

Let $u_R$ be the harmonic function satisfying the Dirichlet boundary condition $u_R=y$ on $\mathcal{T}$ and the Neumann condition $\partial_yu_R=0$ on $\mathcal{B}$. By standard arguments, $u_R$ is the minimizer of the Dirichlet energy with Dirichlet condition on $\mathcal{T}$:
\begin{equation*}
\begin{split}
\mathcal{D}_R(u_R):=\min\{\mathcal{D}_R(g):\,\, g(x,y)=y\hbox{ on }\mathcal{T}\}
\end{split}
\end{equation*}
and since $f^-$ is admissible, we see that
\begin{equation*}
\begin{split}
\mathcal{D}_R(u_R)\le\mathcal{D}_R(f^-):=\iint_R\vert\nabla f^-(x,y)\vert^2dxdy\le \iint_{\Omega^-}\vert\nabla f^-(x,y)\vert^2dxdy.
\end{split}
\end{equation*}

The energy of $u_R$ can be computed: letting $v=y-u_R$, we see that it satisfies $0$ Dirichlet boundary condition on top and sides and the Neumann condition $\partial_y v=1$ at the bottom. We can then expand into Fourier series to get
\begin{equation*}
\begin{split}
v(x,y):=\sum_{n\ge1} a_n\sin(\frac{\pi nx}{L})\sinh(\frac{n\pi}{L}(y-H)),\qquad 1=\sum_{n\ge 1}\frac{n\pi}{L}\cosh(\frac{n\pi}{L}H)a_n\sin(\frac{\pi nx}{L}).
\end{split}
\end{equation*}
The Fourier coefficients can easily be computed to be
\begin{equation*}
\begin{split}
a_n=\frac{2(1+(-1)^n)}{n\pi}\frac{L}{n\pi}\frac{1}{\cosh(\frac{n\pi}{L}H)}
\end{split}
\end{equation*}
and we conclude that
\begin{equation*}
\begin{split}
\Vert \nabla_{x,y}v\Vert_{L^2(R)}^2=\sum_{n\ge1} \frac{n^2\pi^2}{2L}Ha_n^2=(LH)\sum_{n\ge1} \frac{2}{n^2\pi^2}\frac{1}{\cosh^2(\frac{2n\pi}{L}H)}\le LH\cdot\frac{1}{3}\frac{1}{\cosh^2(\frac{2\pi}{L}H)},
\end{split}
\end{equation*}
where in the last step we used the fact that the maximum of $\big(\cosh^2(\tfrac{2n\pi}{L}H)\big)^{-1}$ is achieved at $n=1$. Moreover, notice that as $H\to\infty$, $\Vert \nabla_{x,y}v\Vert_{L^2(R)}^2\to0$.
As a result, we see that
\begin{equation*}
\begin{split}
\mathcal{E}&> \mu^-\lb\rho\rb^2\Vert \nabla_{x,y}u\Vert_{L^2(R)}^2= \mu^-\lb\rho\rb^2\Vert\nabla_{x,y}(y-v)\Vert_{L^2(R)}^2\ge  \mu^-\lb\rho\rb^2 LH(1+o_{H\to\infty}(1))\\
&>  \mu^-\lb\rho\rb^2\Vert \eta_+\Vert_{L^1}(1+o(1)).
\end{split}
\end{equation*}
It remains to modify $\eta$ to have a smooth function. This can be done by choosing $\eta$ as a smooth function bounded by two step functions of same height but slightly different support: $\sigma_1\le\eta\le\sigma_2$ with $\Vert\sigma_2-\sigma_1\Vert_{L^1}\ll \Vert \sigma_1\Vert_{L^1}$. We let $R_1$ be the rectangle associated to $\sigma_1$ and $R_2$ the rectangle associated to $\sigma_2$ and $\Omega^-_+:=\Omega^-\cap\{y\ge0\}$ so that $R_1\subset\Omega^-_+\subset R_2$ and $\hbox{Area}(R_2\setminus R_1)\ll \hbox{Area}(\Omega^-_+)$. We call $u_{R_1}$, $u_{R_2}$ the minimizers of the Dirichlet energy as before, and given a function $u$ defined on one of the above domains, we define $u^{ext}$ its extension to a bigger domain by $y$ (this is a Lipshitz function). Then, we see that, 
\begin{equation*}
\begin{split}
\mathcal{D}_{R_2}(u_{R_2})&<\mathcal{D}_{R_2}((f^-)^{ext})=\mathcal{D}_{\Omega^-_+}(\eta)+\Vert \sigma_2-\eta\Vert_{L^1},\\\
\mathcal{D}_{\Omega^-_+}(\eta)&<\mathcal{D}_{\Omega^-_+}(u_{R_1}^{ext})=\mathcal{D}_{R_1}(u_{R_1})+\Vert \eta-\sigma_1\Vert_{L^1},
\end{split}
\end{equation*}
and in particular
\begin{equation*}
\begin{split}
\vert\mathcal{D}_{\Omega^-_+}(\eta)-\Vert \eta\Vert_{L^1}\vert\le 3\Vert \sigma_2-\sigma_1\Vert_{L^1}\ll\Vert \eta\Vert_{L^1}.
\end{split}
\end{equation*}
%
This finishes the proof of Lemma \ref{OptimalConstantLem}.
\end{proof}

\subsection*{{\bf Acknowledgments}}

The authors would like to thank Prof. Gomez-Serrano for many stimulating discussions and constructive comments.

This material is based upon work supported by the National Science Foundation under Grant No. DMS-1929284 while SVH and BP were in residence at the Institute for Computational and Experimental Research in Mathematics in Providence, RI, during the program ``Hamiltonian Methods in Dispersive and Wave Evolution Equations''. The work of S.V.H is funded by the National Science Foundation through the award DMS-2102961. B.P. was also partially supported by a Simons fellowship and by NSF Grant No. DMS-2154162.

\end{document}